\theoremstyle{plain}
\newtheorem{theorem}{Theorem}
\newtheorem{lemma}[theorem]{Lemma}
\theoremstyle{definition}
\newtheorem{conjecture}[theorem]{Conjecture}
\theoremstyle{remark}
\title{\bf A Note on Weak Dirac Conjecture}
\author{Zeye Han\\
\small Kunming, Yunnan, China\\
\small\tt migaoyan@sina.com\\
}
\begin{document}

\maketitle


\begin{abstract}
  We show that every set $\mathcal{P}$ of $n$ non-collinear points in the plane contains a point incident to at least $\lceil\frac{n}{3}\rceil+1$ of the lines determined by $\mathcal{P}$.

  \bigskip\noindent \textbf{Keywords:} Points; Incident-line-number; Weak Dirac Conjecture
\end{abstract}

\section{Introduction}

In the note we denote by $\mathcal{P}$ a set of non-collinear points in the plane, and $\mathcal{L}(\mathcal{P})$ the set of lines determined by $\mathcal{P}$, a line that passing through at least two points in $\mathcal{P}$ is said to be determined by $\mathcal{P}$. The system of lines spanned by a finite point set in the plane usually called configuration or arrangement. For a point $P$ of $\mathcal{P}$, we shall denote by $d(P)$ the number of lines of $\mathcal{L}(\mathcal{P})$ which are incident to $P$, and $d(P)$ is usually called the incident-line-number or multiplicity of $P$, see \cite{BMP} and \cite{KW} for details. And, we shall denote by $l_r$ the number of $r$-rich lines, that is they passing through precisely $r$ points of $\mathcal{P}$.

The Dirac's conjecture is a well-known problem in combinatorial
geometry. It was proven by Dirac from sixty years earlier. In 1951, Dirac (\cite{Dirac}) showed that,
\begin{theorem}
Every set $\mathcal{P}$ of $n$ non-collinear points in the plane contains a point incident to at least $\sqrt{n}$ lines of $\mathcal{L}(\mathcal{P})$.
\end{theorem}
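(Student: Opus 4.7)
The plan is to induct on $n$. Assume the claim holds for every non-collinear point set of size strictly less than $n$, and let $\mathcal{P}$ be a non-collinear set of $n$ points.

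First, I would perform a \emph{line-size reduction}. If some line $\ell \in \mathcal{L}(\mathcal{P})$ contains at least $\lceil n/3 \rceil + 1$ points of $\mathcal{P}$, then for any $Q \in \mathcal{P}\setminus\ell$ (which exists by non-collinearity) each point of $\ell\cap\mathcal{P}$ determines a distinct line through $Q$, immediately giving $d(Q) \geq \lceil n/3 \rceil + 1$. Hence we may assume every line of $\mathcal{L}(\mathcal{P})$ carries at most $\lceil n/3 \rceil$ points of $\mathcal{P}$. In particular, since $\lceil n/3 \rceil < n - 1$ for $n$ large enough, deleting a single point from $\mathcal{P}$ cannot make the remainder collinear.

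Next, by the Sylvester--Gallai theorem, $\mathcal{P}$ admits an ordinary line $AB$ (exactly two points of $\mathcal{P}$). I would form $\mathcal{P}' := \mathcal{P}\setminus\{A\}$; by the previous paragraph $\mathcal{P}'$ is non-collinear, so induction yields $P\in\mathcal{P}'$ with $d_{\mathcal{P}'}(P)\geq \lceil (n-1)/3\rceil + 1$. Because removing $A$ can only destroy the single line $PA$ (and only if that line was ordinary in $\mathcal{P}$), one always has $d_{\mathcal{P}}(P)\geq d_{\mathcal{P}'}(P)$. A check by residues modulo $3$ shows that $\lceil (n-1)/3\rceil = \lceil n/3\rceil$ whenever $n\not\equiv 1\pmod{3}$, so in those two cases the inductive bound alone already delivers $d_{\mathcal{P}}(P)\geq \lceil n/3\rceil + 1$.

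The residual case $n\equiv 1 \pmod{3}$ is the main obstacle: induction falls exactly one short of the target and has to be upgraded by showing that $PA$ is ordinary in $\mathcal{P}$, so that $d_{\mathcal{P}}(P) = d_{\mathcal{P}'}(P) + 1 \geq \lceil n/3\rceil + 1$. The plan is to exploit the freedom in choosing the ordinary line $AB$: aim to pick it so that the inductively produced high-multiplicity point $P$ of $\mathcal{P}'$ coincides with $B$. Guaranteeing such an alignment seems to require a counting layer — for instance, bounding below the number of ordinary lines through each candidate (perhaps via Kelly--Moser's $l_2 \geq 3n/7$) and then using a pigeonhole on the set of points that could possibly witness the maximum in $\mathcal{P}'$. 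Pinning down this coincidence is where the subtlety lies, because induction by itself cannot control \emph{which} vertex attains the maximum in the reduced configuration.
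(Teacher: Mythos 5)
The statement you are asked to prove is Dirac's classical $\sqrt{n}$ bound, which the paper only cites (from \cite{Dirac}) and never proves, so there is no internal proof to match; but your attempt, which instead goes after the much stronger bound $\lceil n/3\rceil+1$ by induction, has a genuine gap that you yourself flag and do not close. In the residual case $n\equiv 1\pmod 3$ you need the ordinary line $PA$ to pass through the point $P$ that attains the inductive maximum in $\mathcal{P}'=\mathcal{P}\setminus\{A\}$, and your proposed fix --- ``choose the ordinary line $AB$ so that $B$ coincides with the inductively produced $P$'' --- is circular: $P$ is only defined after you have chosen which point $A$ to delete, and different choices of $A$ produce different reduced configurations with different maximizers. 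Kelly--Moser gives you many ordinary lines, but neither it nor the induction hypothesis gives you any control over \emph{which} point of $\mathcal{P}'$ has large degree, so the pigeonhole you gesture at has nothing to stand on. You also never establish a base case, and the phrase ``for $n$ large enough'' leaves the small cases unhandled. This is not a cosmetic issue: the $\lceil n/3\rceil+1$ bound is the paper's main theorem, and it is obtained there by a global counting argument (a Hirzebruch-type inequality from \cite{Bojanowski} and \cite{Langer}, combined with double counting and the pigeonhole principle) precisely because local deletion arguments of the kind you sketch lose exactly one line in the bad residue class.

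For the statement actually posed, the $\sqrt{n}$ bound, no induction is needed and you are working far too hard. Take any point $P_0$ and let $t=d(P_0)$; if $t\geq\sqrt{n}$ you are done. Otherwise the $t$ lines through $P_0$ cover $\mathcal{P}\setminus\{P_0\}$, so some line $\ell$ through $P_0$ contains at least $1+\frac{n-1}{t}$ points of $\mathcal{P}$; by non-collinearity there is a point $Q\notin\ell$, and the points of $\ell\cap\mathcal{P}$ determine distinct lines through $Q$, giving $d(Q)\geq 1+\frac{n-1}{t}>1+\frac{n-1}{\sqrt{n}}=\sqrt{n}+1-\frac{1}{\sqrt{n}}\geq\sqrt{n}$. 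Your opening ``line-size reduction'' is exactly the second half of this argument; you only need to precede it with the trivial first half rather than with an induction you cannot complete.
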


Dirac (\cite{Dirac}) also made the following conjecture and checked the truth for $n\leq14$. In the case it is easy to see that this is best-possible, if $\mathcal{P}$ is equally distributed on two lines which $n$ is even, the situation is similar when $n$ is odd, then this bound is tight.
\begin{conjecture}[\textbf{Dirac's Conjecture}]
Every set $\mathcal{P}$ of $n$ non-collinear points in the plane contains a point incident to at least $\lfloor\frac{n}{2}\rfloor$ lines of $\mathcal{L}(\mathcal{P})$, where $\lfloor x\rfloor$ denotes the largest integer not exceeded by $x$.
\end{conjecture}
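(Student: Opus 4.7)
The plan is to argue by strong induction on $n$, using Dirac's verified cases $n \le 14$ as the base. For the inductive step with $n \ge 15$, let $P \in \mathcal{P}$ attain the maximum multiplicity $t := d(P)$, and suppose for contradiction that $t \le \lfloor n/2 \rfloor - 1$. I would organize $\mathcal{P} \setminus \{P\}$ along the pencil of $t$ lines $\ell_1, \ldots, \ell_t$ through $P$, writing $n_i = |\ell_i \cap \mathcal{P}| - 1$ with $\sum_i n_i = n - 1$. Pigeonhole then gives some line, say $\ell_1$, with $n_1 \ge \lceil (n-1)/t \rceil$; in the tight two-line extremal configuration one expects $n_1$ to be very near $n/2$, which is the regime the argument must squeeze hardest.

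The core counting device would be to fix a point $Q \in \ell_1 \setminus \{P\}$ and examine the pencil through $Q$. Every point of $\mathcal{P} \setminus \ell_1$ lies on some line through $Q$ different from $\ell_1$, and each such line contains $Q$ together with points whose multiplicities are capped by $t$. A careful double-count of the incidences between $Q$'s pencil and the points off $\ell_1$ should force $d(Q) \ge n - n_1 - O(1)$, which contradicts $d(Q) \le t$ as soon as $n - n_1$ substantially exceeds $t$. The delicate case is $n_1 \approx n/2$, which is exactly Dirac's extremal example; here I would argue that equality in the pigeonhole step propagates rigidity, forcing the remaining points to lie on a second line through $P$ (or through a canonical second center) and then hand-check that configuration.

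To make the contradiction quantitative I would combine the standard identity $\sum_{Q \in \mathcal{P}} \binom{d(Q)}{2} \ge \binom{n-1}{2}$ with Melchior's inequality $l_2 \ge 3 + \sum_{k \ge 4}(k-3)\, l_k$ and with iteration of Dirac's $\sqrt{n}$ bound from Theorem~1, applied to carefully chosen subconfigurations obtained by deleting one of the pencils $\ell_i$. If every point of $\mathcal{P}$ satisfied $d(\cdot) < \lfloor n/2 \rfloor$, these inequalities together should force an unsustainable number of $2$-rich and $3$-rich lines relative to the near-pencil structure imposed by $P$, driving the contradiction home.

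The main obstacle is that Dirac's Conjecture itself is a long-standing open problem, and the framework sketched above is known to recover only linear bounds with small constants (such as this paper's $\lceil n/3 \rceil + 1$, or earlier results in the Payne--Wood tradition). The technical heart of any complete proof would lie in a structural classification of near-extremal configurations in the spirit of the Green--Tao work on ordinary lines: one must show that configurations approaching equality in the multiplicity bound are forced to the two-line template of the tightness example, and that no hidden family of quasi-extremal configurations (for instance on cubic curves) can accommodate $t < \lfloor n/2 \rfloor$. Closing this gap is where I expect the real difficulty to concentrate.
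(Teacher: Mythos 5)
There is nothing in the paper to compare your attempt against: the statement you were given is labelled a \emph{conjecture} (Dirac's Conjecture), and the paper never proves it --- its actual result is the much weaker bound $\lceil\frac{n}{3}\rceil+1$ (Theorem~\ref{Thm:FabGraphs1}). Worse, the paper's own introduction records that the statement, as literally written, is \emph{false}: Gr\"unbaum found configurations of $n$ non-collinear points in which every point is incident to fewer than $\lfloor\frac{n}{2}\rfloor$ of the spanned lines, for $n\in\{15,19,25,31,37\}$. This is precisely why the community retreated to the Strong Dirac Conjecture, which asks only for $\frac{n}{2}-c_0$ with an unspecified additive constant. So no correct proof of the statement exists to be found.

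Your proposed strategy collides with this fact immediately and concretely: you take Dirac's verified cases $n\leq14$ as the induction base and attempt an inductive step for $n\geq15$, but $n=15$ is the first of Gr\"unbaum's counterexamples. The claim you are trying to establish in the inductive step is false at its very first instance, so no refinement of the pencil decomposition, the double count $\sum_{Q}\binom{d(Q)}{2}\geq\binom{n}{2}$, Melchior's inequality, or the rigidity analysis of near-extremal configurations can close the argument. You partly sense this in your final paragraph, where you concede the conjecture is ``open''; in fact it is not merely open but refuted in this form, and only the $c_0$-corrected version remains a live problem. The tools you list are, however, exactly the right ingredients for linear bounds with explicit small constants: the paper combines a Hirzebruch-type inequality (Theorem~\ref{Thm:FabGraphs0}, due to Bojanowski building on Langer) with the two counting lemmas (Lemma~\ref{lem:Technical1} and Lemma~\ref{lem:Technical2}) and the pigeonhole principle to obtain $\lceil\frac{n}{3}\rceil+1$. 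If you want a target your methods can actually reach, that theorem --- not the conjecture --- is the statement to aim at.
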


This conjecture is certainly false for small $n$, some counter-examples were found by Gr\"{u}nbaum for relatively small values of $n$, namely $n\in\{15, 19, 25, 31, 37\}$, { see \cite{Grunbaum1} and \cite{Grunbaum2} for details. In 2011, Akiyama, Ito, Kobayashi, and Nakamura (\cite{AIKN}) constructed a set $\mathcal{P}$ of non-colinear $n$ points for every integer $n\geq8$ except $n=12k+23$, $k\geq0$, satisfying $d(P)\leq\lfloor\frac{n}{2}\rfloor$ for every point $P$ of $\mathcal{P}$.

Then the following conjecture is naturally proposed, see \cite{BMP} for details.

\begin{conjecture}
[\textbf{Strong Dirac Conjecture}] Every set $\mathcal{P}$ of $n$ non-collinear points in the plane contains a point incident to at least $\frac{n}{2}-c_0$ lines of $\mathcal{L}(\mathcal{P})$, for some constant $c_0$.
\end{conjecture}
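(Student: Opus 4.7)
The plan would be to build on the weak bound $\lceil n/3\rceil+1$ that has just been established and close the gap up to $\frac{n}{2}-c_0$. A natural starting point is to analyze configurations that nearly saturate the weak bound: if the maximum multiplicity is some $d$, then most points must lie on many relatively rich lines through an extremal point, and one would try to force additional incidences that either give a point of higher multiplicity directly or constrain the structure so severely that the configuration must be close to a near-pencil or to the equally split two-line example (where $\lfloor n/2\rfloor$ is achieved and tight).

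A second ingredient would be a Szemer\'edi--Trotter style incidence count combined with the identity $\sum_r \binom{r}{2}\, l_r = \binom{n}{2}$ and Melchior-type inequalities on the ordinary and $r$-rich lines. If every point satisfies $d(P)\leq \frac{n}{2}-C$, one would bound $\sum_P d(P)$ from below in two different ways and try to derive a contradiction once $C$ is large enough while still being an absolute constant. The critical sub-step is to show that lines of small richness cannot simultaneously be abundant and distributed uniformly away from any fixed point, so that double counting forces a point of high multiplicity to exist.

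The main obstacle, and the reason this conjecture remains open, is that the gap between $n/3$ and $n/2$ is precisely the regime where extremal configurations become highly constrained but have not yet been classified. Inductive strategies that peel off a low-multiplicity point and apply the hypothesis to $\mathcal{P}\setminus\{P\}$ break down, because removing a single point can destroy the rich-line structure in unpredictable ways, and one only gains $O(1)$ per step while the target moves by $1/2$. I expect the hardest step to be a genuine structural theorem classifying point sets whose maximum multiplicity is below $\frac{n}{2}-c_0$, showing that such sets must be nearly equally split between two lines. Short of such a classification, even a conditional result under an additional hypothesis (e.g., a bound on the number of ordinary lines, or a Green--Tao type structural input) would already be a substantial advance, and any unconditional argument will presumably require a genuinely new idea beyond the counting techniques used for the $\lceil n/3\rceil+1$ bound.
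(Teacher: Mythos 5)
The statement you were asked to prove is the \emph{Strong Dirac Conjecture}, which the paper states only as a conjecture: there is no proof of it in the paper, and it remains an open problem in combinatorial geometry. The paper's actual contribution is the far weaker bound of Theorem~\ref{Thm:FabGraphs1}, namely that some point lies on at least $\lceil\frac{n}{3}\rceil+1$ lines, obtained by combining the Hirzebruch-type inequality of Theorem~\ref{Thm:FabGraphs0} with the double-counting identities and the pigeonhole principle. So there is no proof in the paper against which your argument can be compared, and your proposal --- to its credit --- recognizes this explicitly: you describe strategies and obstacles rather than claiming a proof.

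As a proof attempt, however, the gap is total rather than local: every decisive step is deferred. The ``structural classification of point sets whose maximum multiplicity is below $\frac{n}{2}-c_0$'' that you identify as the hardest step is not a sub-step of a proof; it is essentially a restatement of the conjecture itself. Moreover, the double-counting route you sketch (bounding $\sum_{P}d(P)$ from below via $\sum_{r}\binom{r}{2}l_r=\binom{n}{2}$ together with Melchior/Hirzebruch-type inequalities and deriving a contradiction from $d(P)\leq\frac{n}{2}-C$ for all $P$) is precisely the paper's method, and it provably stalls near $\frac{n}{3}$: the quadratic coefficient $\frac{r^2}{4}-r$ in Theorem~\ref{Thm:FabGraphs0} is what limits the averaging argument to $\sum_{P}d(P)\geq\frac{n(n+3)}{3}$, and pushing the same scheme to $\frac{n}{2}$ would require a Hirzebruch-type inequality with a substantially larger quadratic coefficient, which is itself an open problem (tied to the known extremal line arrangements for which Langer's inequality is sharp). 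In short, your assessment of the difficulty is accurate, but nothing in the proposal constitutes progress toward a proof, and the statement should be treated as what the paper says it is: a conjecture.
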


In 1961, Erd\"{o}s (\cite{Erdos}) proposed the following weakened conjecture.
\begin{conjecture}[\textbf{Weak Dirac Conjecture}]
Every set $\mathcal{P}$ of $n$ non-collinear points contains a point incident to at least $\frac{n}{c_1}$ lines of $\mathcal{L}(\mathcal{P})$, for some constant $c_1$.
\end{conjecture}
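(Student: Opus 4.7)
The plan is to derive the conjecture from Beck's two-extremes theorem: there exist absolute constants $\alpha,\beta > 0$ such that for every non-collinear set $\mathcal{P}$ of $n$ points in the plane, either some line of $\mathcal{L}(\mathcal{P})$ contains at least $\alpha n$ points of $\mathcal{P}$, or $|\mathcal{L}(\mathcal{P})| \geq \beta n^{2}$. Taking Beck's dichotomy as a black box lets the whole argument split into two short, self-contained cases, each of which directly exhibits a point of linear degree.

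First I would handle the ``rich line'' case. Suppose some line $L$ meets $\mathcal{P}$ in a set $S$ with $|S| \geq \alpha n$. Because $\mathcal{P}$ is non-collinear, I can pick a point $Q \in \mathcal{P}\setminus L$. For any two distinct points $R_{1},R_{2}\in S$, the lines $QR_{1}$ and $QR_{2}$ must differ: otherwise $R_{1}$ and $R_{2}$ would both lie on a single line through $Q$, which by $R_{1},R_{2}\in L$ forces that line to coincide with $L$, and hence $Q\in L$, a contradiction. Therefore $d(Q)\geq |S|\geq \alpha n$, so the conjecture holds with $c_{1}=1/\alpha$.

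Next I would treat the ``many lines'' case: $|\mathcal{L}(\mathcal{P})|\geq \beta n^{2}$. Double-counting point--line incidences gives
\[
\sum_{P\in\mathcal{P}} d(P) \;=\; \sum_{\ell\in\mathcal{L}(\mathcal{P})} |\ell\cap\mathcal{P}| \;\geq\; 2\,|\mathcal{L}(\mathcal{P})| \;\geq\; 2\beta n^{2},
\]
where the first inequality uses the fact that every line of $\mathcal{L}(\mathcal{P})$ contains at least two points of $\mathcal{P}$. Averaging over the $n$ points produces some $P$ with $d(P)\geq 2\beta n$, so $c_{1}=1/(2\beta)$ suffices here. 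Combining the two cases, the choice $c_{1}=1/\min(\alpha,2\beta)$ proves the conjecture.

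The main obstacle is Beck's theorem itself, whose standard proof rests on the Szemer\'edi--Trotter incidence bound together with a dyadic decomposition of lines by multiplicity; reproducing even a weak linear lower bound from scratch is nontrivial. Moreover, the constants produced this route are modest, and pushing them all the way to the abstract's quantitative target $\lceil n/3\rceil+1$ (essentially $c_{1}=3$) will almost certainly require abandoning the blunt dichotomy above in favor of a refined structural analysis at a vertex of maximum multiplicity, studying how the lines through that vertex fibre the remaining $n-1$ points and extracting a local contradiction when the maximum degree is assumed small.
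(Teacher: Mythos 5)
Your proof is correct, and it is a legitimately different route from the one the paper takes. You derive the Weak Dirac Conjecture from Beck's two-extremes dichotomy: the rich-line case is handled by projecting from an off-line point $Q$ (the $|S|$ lines $QR$, $R\in S$, are pairwise distinct, so $d(Q)\geq\alpha n$), and the many-lines case by the incidence count $\sum_{P}d(P)=\sum_{\ell}|\ell\cap\mathcal{P}|\geq 2|\mathcal{L}(\mathcal{P})|\geq 2\beta n^{2}$ plus averaging; both steps are sound, and combining them with $c_{1}=1/\min(\alpha,2\beta)$ is exactly Beck's original 1983 argument (cited in the paper), so the only external input is a known theorem rather than a circularity. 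The paper instead proves the sharper quantitative statement with $c_{1}=3$ (in fact $\lceil n/3\rceil+1$): after disposing of the case of $\lceil n/3\rceil+1$ collinear points by the same projection trick you use, it invokes the Langer--Bojanowski Hirzebruch-type inequality $l_2+\frac{3}{4}l_3\geq n+\sum_{r\geq5}(\frac{r^2}{4}-r)l_r$, combines it with the double-counting identities $\sum_{r\geq2}\binom{r}{2}l_r=\binom{n}{2}$ and $\sum_{P}d(P)=\sum_{r\geq2}rl_r$ to get $\sum_{P}d(P)\geq\frac{n(n+3)}{3}$, and finishes by pigeonhole. The trade-off is what you anticipated in your closing remark: your approach is purely combinatorial but inherits the large, essentially unoptimized constant hidden in Beck's theorem (itself resting on Szemer\'edi--Trotter), whereas the paper imports a strong inequality from algebraic geometry as a black box and in exchange gets an explicit and much better constant with an otherwise elementary computation.
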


In 1983, the Weak Dirac Conjecture was proved independently by Beck \cite{Beck} and Szemer\'{e}di and Trotter \cite{ST} for the case with $c_1$ unspecified or very large.

In 2012, based on Crossing Lemma, Szemer\'{e}di-Trotter Theorem and Hirzebruch's Inequality, Payne and Wood (\cite{PW}) proved the
following theorem,
\begin{theorem}
Every set $\mathcal{P}$ of $n$ non-collinear points contains a point incident to at least $\frac{n}{37}$ lines of $\mathcal{L}(\mathcal{P})$.
\end{theorem}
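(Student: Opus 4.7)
The plan is to argue by contradiction. Suppose, for a set $\mathcal{P}$ of $n$ non-collinear points, that every $P \in \mathcal{P}$ satisfies $d(P) \leq \lceil n/3 \rceil$, and write $t := \max_{P \in \mathcal{P}} d(P) \leq \lceil n/3 \rceil$.

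The first reduction is to bound the maximum line size $M := \max_{\ell \in \mathcal{L}(\mathcal{P})} |\ell \cap \mathcal{P}|$. If some line $\ell^*$ contained $M \geq \lceil n/3 \rceil + 1$ points of $\mathcal{P}$, then since $\mathcal{P}$ is non-collinear there would exist $Q \in \mathcal{P} \setminus \ell^*$, and the $M$ lines joining $Q$ to the points of $\ell^* \cap \mathcal{P}$ would be pairwise distinct, yielding $d(Q) \geq M \geq \lceil n/3 \rceil + 1$, a contradiction. Hence $M \leq t \leq \lceil n/3 \rceil$.

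I would then fix a point $P$ with $d(P) = t$, label the lines through $P$ as $\ell_1, \dots, \ell_t$, and set $a_i := |\ell_i \cap \mathcal{P}| - 1$, arranged so that $a_1 \geq \cdots \geq a_t \geq 1$. The partition $\mathcal{P} \setminus \{P\} = \bigsqcup_i (\ell_i \setminus \{P\})$ gives $\sum_i a_i = n - 1$, and the previous step gives $a_i \leq t - 1$ for every $i$. For any $Q \in \ell_i \setminus \{P\}$, every line through $Q$ other than $\ell_i$ avoids $P$ and meets each $\ell_j$ ($j \neq i$) in at most one point of $\mathcal{P}$; such a line therefore contains at most $t - 1$ points outside $\ell_i$, and covering the $n - a_i - 1$ off-$\ell_i$ points of $\mathcal{P}$ with at most $d(Q) - 1 \leq t - 1$ such lines yields the basic inequality $(t-1)^2 \geq n - a_i - 1$.

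The hard part will be extracting the improvement from $\sqrt{n}$ to $\lceil n/3 \rceil + 1$, since the one-point analysis above, combined with $\sum a_i = n - 1$, only reproduces Dirac's bound $t(t-1) \geq n-1$. I expect the proof to run the same partition argument simultaneously at a \emph{second} point $P' \in \ell_1 \setminus \{P\}$ of high multiplicity, using the shared line $\ell_1$ to sharpen the pigeonhole and refine the bookkeeping across the $\ell_i$'s. A complementary case split is likely needed: when some $a_i$ is close to $t-1$ (so the configuration locally resembles two concurrent lines, where much stronger bounds are available) versus when the $a_i$ are balanced (so global averaging forces the incidence count to exceed $nt$). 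Together these two regimes should furnish the desired $t \geq \lceil n/3 \rceil + 1$, and verifying that the chosen auxiliary point $P'$ always provides enough new information to close the case will be the crux of the argument.
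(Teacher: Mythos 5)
Your proposal has a genuine gap at exactly the point you yourself flag as ``the hard part.'' Everything you write down concretely --- the reduction showing no line carries more than $\lceil n/3\rceil$ points (else a point off that line already has large multiplicity), the partition of $\mathcal{P}\setminus\{P\}$ among the $t$ lines through a point of maximum multiplicity, and the covering inequality $(t-1)^2\geq n-a_i-1$ --- is correct, but it is precisely Dirac's 1951 argument and yields only $t\geq c\sqrt{n}$. The remainder (``I expect the proof to run the same partition argument at a second point,'' ``a complementary case split is likely needed,'' ``should furnish the desired bound'') is speculation rather than proof, and there is no reason to believe it closes: the jump from $\sqrt{n}$ to a bound linear in $n$ is the entire content of the Weak Dirac Conjecture, which resisted elementary attack and was first settled (with an unspecified constant) by Beck and by Szemer\'edi and Trotter in 1983 using crossing-number and incidence machinery. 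No refinement of the two-point partition argument is known to give $n/c$ for any constant $c$, let alone $n/37$.

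The paper does not prove this statement by such means; it obtains it (via the stronger main theorem) from an entirely different source. After the same initial reduction you make --- which guarantees that at most $\lceil n/3\rceil\leq\lfloor\frac{2}{3}n\rfloor$ points are collinear --- it invokes the Hirzebruch-type inequality $l_2+\frac{3}{4}l_3\geq n+\sum_{r\geq5}(\frac{r^2}{4}-r)l_r$ of Langer/Bojanowski, combines it with the double-counting identity $\sum_{r\geq2}\binom{r}{2}l_r=\binom{n}{2}$ to deduce $\sum_{P\in\mathcal{P}}d(P)\geq\frac{n(n+3)}{3}$, and finishes with the pigeonhole principle. The external input from algebraic geometry (ultimately the Bogomolov--Miyaoka--Yau inequality) is what supplies the linear lower bound on the total incidence count; your sketch contains no substitute for it, so the argument as proposed cannot be completed along the lines you indicate.
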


In 2016, Pham and Phi (\cite{PP}) refined the result of Payne and Wood to given that,
\begin{theorem}
Every set $\mathcal{P}$ of $n$ non-collinear points contains a point incident to at least $\frac{n}{26}+2$ lines of $\mathcal{L}(\mathcal{P})$.
\end{theorem}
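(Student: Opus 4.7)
The plan is to proceed by contradiction: assume every $P \in \mathcal{P}$ satisfies $d(P) \le \lceil n/3 \rceil$, and set $t := \max_{P \in \mathcal{P}} d(P) \le \lceil n/3 \rceil$. A first useful observation is that this assumption forces every line of $\mathcal{L}(\mathcal{P})$ to contain at most $t$ points: indeed, for any line $\ell$ with $k := |\ell \cap \mathcal{P}|$, non-collinearity produces a point $Q \notin \ell$, and the $k$ lines $QR$ ($R \in \ell \cap \mathcal{P}$) are pairwise distinct since $Q \notin \ell$, so $d(Q) \ge k$, whence $k \le t$.

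Next I would fix a point $P \in \mathcal{P}$ with $d(P) = t$, let $\ell_1, \dots, \ell_t$ be the lines through $P$, and set $a_i := |\ell_i \cap \mathcal{P}| - 1$; then $\sum_{i=1}^{t} a_i = n - 1$ and $a_i \le t - 1$ by the observation above. For any index $i$ and any $Q \in (\ell_i \cap \mathcal{P}) \setminus \{P\}$, each line through $Q$ distinct from $\ell_i$ meets every $\ell_j$ with $j \neq i$ in at most one point (since $Q \notin \ell_j$), so it carries at most $t - 1$ points of $\mathcal{P} \setminus \{Q\}$. The $d(Q) - 1$ such lines must collectively cover the $n - 1 - a_i$ points of $\mathcal{P}$ outside $\ell_i$, which combined with $d(Q) \le t$ gives
\[
(t-1)^2 \;\ge\; (d(Q) - 1)(t-1) \;\ge\; n - 1 - a_i,
\]
i.e.\ the key local inequality $a_i \ge n - 1 - (t-1)^2$, valid for every $i$.

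The main obstacle is that, naively, summing this over $i$ and using $\sum_i a_i = n - 1$ yields only $t(t-1) \ge n - 1$---essentially the original Dirac $\sqrt{n}$ bound---which falls far short of $\lceil n/3 \rceil + 1$. To bridge the gap I would exploit the two-sided control $n - 1 - (t-1)^2 \le a_i \le t - 1$ jointly with the global double-counting identities $\sum_\ell |\ell| = \sum_{Q \in \mathcal{P}} d(Q)$ and $\sum_\ell \binom{|\ell|}{2} = \binom{n}{2}$, and iterate the preceding analysis at a carefully chosen secondary extremal point on a short line through $P$, tracking the joint incidences of the two pencils of lines through that point and through $P$. The heart of the proof, and where I expect the real combinatorial work to lie, is showing that this system of constraints is simultaneously infeasible whenever $t \le \lceil n/3 \rceil$, thereby forcing some point of $\mathcal{P}$ to have multiplicity at least $\lceil n/3 \rceil + 1$.
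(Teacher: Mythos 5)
Your setup is correct as far as it goes: the observation that every line carries at most $t:=\max_P d(P)$ points, the identity $\sum_i a_i=n-1$ for the pencil through an extremal point, and the local inequality $a_i\ge n-1-(t-1)^2$ are all standard and correctly derived. But the step you yourself flag as ``the heart of the proof'' is not a gap that can be closed along the lines you describe: the constraint system you assemble, namely $t\le\lceil n/3\rceil$, $n-1-(t-1)^2\le a_i\le t-1$, $\sum_i a_i=n-1$, together with $\sum_\ell|\ell|=\sum_Q d(Q)\le nt$ and $\sum_\ell\binom{|\ell|}{2}=\binom{n}{2}$, is easily \emph{feasible} when $t\approx n/3$. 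Indeed $(t-1)^2\approx n^2/9\gg n$, so your lower bound on $a_i$ is vacuous, and taking most $a_i$ equal to $3$ satisfies every listed condition; the two global identities combined with $|\ell|\le t$ only reproduce $t(t-1)\ge n-1$, i.e.\ Dirac's $\sqrt{n}$ bound again. Iterating the pencil argument at a secondary extremal point suffers the same fate: such double-counting is insensitive to the actual planar geometry and cannot by itself yield a linear lower bound on $\max_P d(P)$.

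The missing ingredient is a genuinely non-elementary input. Note that the paper does not itself prove the $\frac{n}{26}+2$ statement --- it cites Pham and Phi, whose argument rests on the Crossing Lemma, the Szemer\'{e}di--Trotter theorem and Hirzebruch's inequality; what the paper proves is the stronger bound $\lceil\frac{n}{3}\rceil+1$, and it does so by invoking the Hirzebruch-type inequality of Langer/Bojanowski (Theorem~\ref{Thm:FabGraphs0}), $l_2+\frac{3}{4}l_3\ge n+\sum_{r\ge5}(\frac{r^2}{4}-r)l_r$, valid once no $\lceil\frac{n}{3}\rceil+1$ points are collinear. Combining that inequality with $\sum_{r\ge2}\binom{r}{2}l_r=\binom{n}{2}$ yields $\sum_{r\ge2}rl_r\ge\frac{n(n+3)}{3}$, hence $\sum_{P}d(P)\ge\frac{n(n+3)}{3}$, and the Pigeonhole Principle finishes. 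Some such input from incidence geometry or algebraic geometry (Beck, Szemer\'{e}di--Trotter, or Bogomolov--Miyaoka--Yau via Hirzebruch and Langer) appears unavoidable; without it your proposal cannot reach any bound of the form $cn$.
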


There are some results from algebraic geometry involving the system of lines spanned by a finite set in the plane. In 1983, Hirzebruch studied arrangements in the complex projective plane by associating with each arrangement some algebraic surfaces, calculating their Chern numbers, and then applying the Bogomolov-Miyaoka-Yau Inequality. In 1984, Hirzebruch \cite{Hirzebruch} obtained the following result,

\begin{theorem}[\textbf{Hirzebruch's Inequality}]
Let $\mathcal{P}$ be a set of $n$ points in the plane with at most $n-3$ collinear, then $$l_2+\frac{3}{4}l_3\geq n+\sum_{r\geq 5}(2r-9)l_r.$$
\end{theorem}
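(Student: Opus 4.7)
The plan is to follow Hirzebruch's original algebraic-geometric proof; no purely combinatorial argument for this inequality is known. The strategy is to attach a smooth complex projective surface to the configuration, compute its Chern numbers, and apply the Bogomolov-Miyaoka-Yau inequality $c_1^2\leq 3c_2$ for minimal surfaces of general type. By projective duality the statement is equivalent to the same inequality in which $n$ counts the lines of an arrangement $\mathcal{A}$ in $\mathbb{CP}^2$, $l_r$ counts the points at which exactly $r$ lines of $\mathcal{A}$ concur, and the hypothesis becomes ``no point of $\mathcal{A}$ lies on more than $n-3$ of the lines.'' I would work in that dual form.

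First, I would complexify $\mathcal{A}$ inside $\mathbb{CP}^2$ and blow up every singular point of $\mathcal{A}$ of multiplicity $r\geq 3$ to produce a smooth surface $Y$ in which the total transform of $\mathcal{A}$ is a simple normal crossings divisor. Fixing an integer $N\geq 2$, I would form the abelian Hirzebruch-Kummer cover $\pi\colon X\to Y$ branched with uniform ramification index $N$ along the proper transforms of the $n$ lines. This is the standard surface whose birational geometry is controlled by the combinatorics of $\mathcal{A}$.

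Second, I would express $c_1^2(X)$ and $c_2(X)$ as explicit linear combinations of $n$, the numbers $l_r$, and $N$, using the transformation laws of Chern numbers under blowing up and under abelian covers branched along normal crossings divisors. Then I would invoke $c_1^2(X)\leq 3 c_2(X)$, valid because $X$ is a minimal surface of general type, and simplify with the optimal choice $N=5$ (this is what forces the coefficient of $l_3$ to come out to $3/4$); rearranging the resulting inequality yields exactly $l_2+\tfrac{3}{4}l_3\geq n+\sum_{r\geq 5}(2r-9)l_r$.

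The main obstacle lies in the last step: verifying that $X$ really is minimal of general type. This is precisely where the assumption ``at most $n-3$ collinear'' enters, since it excludes the near-pencil arrangements for which $X$ would instead be ruled or elliptic and Bogomolov-Miyaoka-Yau would fail; ruling out these degenerate configurations requires a careful case analysis via the Kodaira classification of surfaces. A secondary, more computational challenge is collecting the Chern-number contributions (one $(r-1)$-dimensional blowup per $r$-fold point, plus the branching contribution for each of the $n$ lines) into exactly the coefficients $1$, $\tfrac{3}{4}$, $1$, and $(2r-9)$ that appear in the stated inequality. Thus, behind the elementary-sounding statement stands Yau's solution of the Calabi conjecture, and it is this depth that has so far prevented a purely combinatorial alternative.
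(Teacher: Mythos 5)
First, note that the paper does not prove this statement at all: it is quoted as a known result from Hirzebruch's article \cite{Hirzebruch} and used as background (indeed the note's main theorem relies on the stronger Theorem~\ref{Thm:FabGraphs0} of Bojanowski--Langer rather than on this inequality). So your proposal can only be measured against the literature. Your high-level outline --- dualize to a line arrangement, complexify, form the exponent-$N$ Hirzebruch--Kummer cover, compute Chern numbers, apply Bogomolov--Miyaoka--Yau --- is indeed Hirzebruch's strategy, and you correctly locate where the collinearity hypothesis enters (excluding pencils and near-pencils so that the resulting surface has non-negative Kodaira dimension).

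There is, however, a concrete gap in your last step. Applying the plain inequality $c_1^2\leq 3c_2$ to the resolved exponent-$5$ Kummer cover yields Hirzebruch's 1983 inequality $l_2+\frac{3}{4}l_3\geq n+\sum_{r\geq5}(r-4)l_r$, whose coefficients $(r-4)$ are strictly smaller than the stated $(2r-9)$ for every $r\geq 6$. The $(2r-9)$ version is precisely the improvement contained in the 1986 paper cited here, and it does not follow from the unadorned BMY inequality on the smooth model: one must keep track of the quotient singularities (equivalently, of the rational curves introduced in the resolution) and use a refined Miyaoka--Sakai-type inequality in which each high-multiplicity point of the arrangement contributes a positive local term to $3c_2-c_1^2$. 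Without that refinement your plan proves a genuinely weaker statement than the one claimed. Two smaller remarks: the Chern-number bookkeeping and the verification that the cover is of general type are announced rather than carried out, so as written this is a program, not a proof; and in the dual picture the hypothesis actually needed is only $t_n=t_{n-1}=0$, so ``at most $n-3$ collinear'' is stronger than necessary, which is harmless but worth noting.
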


In 2003, the following Hirzebruch-type inequality was announced in the thesis \cite{Bojanowski} as Lemma 2.2 based on Langer's work \cite{Langer}.

\begin{theorem}
  \label{Thm:FabGraphs0}
  Let $\mathcal{P}$ be a set of $n$ points in the plane with at most
 $\lfloor\frac{2}{3}n\rfloor$ collinear, then
 $$l_2+\frac{3}{4}l_3\geq n+\sum_{r\geq 5}(\frac{r^2}{4}-r)l_r.$$
\end{theorem}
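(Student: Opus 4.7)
The plan is to follow the general algebro-geometric strategy that Hirzebruch used to establish the original inequality, but to replace the Bogomolov-Miyaoka-Yau inequality $c_1^{2}\le 3c_2$ with Langer's sharper orbifold inequality, as indicated in Bojanowski's thesis. First I would associate to the line arrangement $\mathcal{L}(\mathcal{P})$ a logarithmic pair $(\mathbb{P}^{2},D)$, where $D$ is a $\mathbb{Q}$-divisor supported on the lines of $\mathcal{L}(\mathcal{P})$ with coefficients chosen so that the resulting pair is log canonical at every $r$-fold point of the arrangement, and then pass to the minimal log resolution. The hypothesis ``at most $\lfloor 2n/3\rfloor$ collinear'' is precisely the threshold that keeps any single line from accumulating too much weight, so that the pair stays log canonical and the associated orbifold surface is of general type (outside the obvious trivial cases).

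Next I would compute the logarithmic Chern numbers $\bar c_{1}^{2}$ and $\bar c_{2}$ of this pair as explicit expressions in $n$, $|\mathcal{L}(\mathcal{P})|$, and the counts $l_{r}$. This is the same bookkeeping that Hirzebruch carries out for the $n-3$ version: each $r$-fold point blown up in the resolution contributes a term quadratic in $r$, each line contributes a self-intersection correction, and after collecting everything one obtains $\bar c_{1}^{2}=A+\sum_{r\geq 2}\alpha_{r}\, l_{r}$ and $\bar c_{2}=B+\sum_{r\geq 2}\beta_{r}\, l_{r}$ with $\alpha_{r},\beta_{r}$ explicit quadratic polynomials in $r$ and $A,B$ expressible through the combinatorial identities $\sum_{r}r\, l_{r}=\sum_{P}d(P)$ and $\sum_{r}\binom{r}{2}l_{r}=\binom{n}{2}$.

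I would then feed these expressions into Langer's inequality for log canonical surfaces of general type, which is the refinement of BMY used in \cite{Langer} and which sharpens the coefficient Hirzebruch could extract from $c_{1}^{2}\le 3c_{2}$. Collecting the coefficients of $l_{r}$ on each side and separating out the small values $r=2,3,4$ (whose coefficients combine to form the left-hand side $l_{2}+\tfrac{3}{4}l_{3}-n$, with the contribution of $l_{4}$ cancelling as in the classical case), I expect the coefficient of $l_{r}$ for $r\ge 5$ to reassemble exactly as $\tfrac{r^{2}}{4}-r$, giving the stated bound.

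The main obstacle is not the final combinatorial rearrangement but verifying that Langer's inequality actually applies under the given collinearity hypothesis. Concretely, one must (i) choose the boundary coefficients in $D$ so that the pair $(\mathbb{P}^{2},D)$ is log canonical at every point of $\mathcal{P}$, including those lying on an $r$-rich line with $r$ close to $2n/3$, and (ii) check that the associated orbifold has nonnegative log Kodaira dimension so that BMY-type inequalities are valid. Once this threshold is justified, the rest of the proof is essentially a mechanical expansion, and the sharper coefficient $\tfrac{r^{2}}{4}-r$ arises from the quadratic dependence built into the Chern number computation rather than from any new combinatorial input.
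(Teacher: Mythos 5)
First, be aware that the paper contains no proof of this statement to compare yours against: Theorem~\ref{Thm:FabGraphs0} is imported verbatim from Bojanowski's thesis (Lemma 2.2 there), which rests on Langer's orbifold Bogomolov--Miyaoka--Yau inequality, and the paper only cites it. Judged on its own, your proposal correctly identifies the provenance of the result (Hirzebruch's strategy with BMY replaced by Langer's refinement), but it has a genuine structural error: you set up the log pair on the wrong arrangement. You place the boundary divisor $D$ on the lines of $\mathcal{L}(\mathcal{P})$; that arrangement has $|\mathcal{L}(\mathcal{P})|$ components (up to $\binom{n}{2}$ of them), its multiple points are intersection points of those lines --- most of which are not points of $\mathcal{P}$ --- and an $r$-rich line is a single component of $D$, not a singular point, so no Chern-number bookkeeping on this pair can produce the quantities $n$ and $l_r$ in the claimed inequality. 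The statement is the projective dual of the Langer--Bojanowski line-arrangement inequality: one must pass to the arrangement of the $n$ dual lines $P^{*}$, $P\in\mathcal{P}$, in which an $r$-rich line of $\mathcal{L}(\mathcal{P})$ becomes an ordinary $r$-fold point, so that $t_r=l_r$ and the hypothesis ``at most $\lfloor\frac{2}{3}n\rfloor$ collinear'' becomes the multiplicity bound $t_r=0$ for $r>\frac{2}{3}n$. That bound is exactly what allows a weight $\alpha$ with $\alpha\le 2/r$ at every singular point (log canonicity of $(\mathbb{P}^2,\alpha D)$ at an ordinary $r$-fold point) and simultaneously $\alpha n\ge 3$, which makes $K_{\mathbb{P}^2}+\alpha D$ effective and brings the pair within the scope of Langer's inequality. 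Your reading of the hypothesis as ``keeping any single line from accumulating too much weight'' is a symptom of the duality being applied on the wrong side.

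Second, even after correcting the setup, everything that actually constitutes the proof is deferred: Langer's inequality is never stated, the local orbifold Euler numbers (or logarithmic Chern numbers) are never computed, and the decisive sentence is that you ``expect'' the coefficient of $l_r$ to reassemble as $\frac{r^2}{4}-r$. Since the entire content of the theorem is that this coefficient improves Hirzebruch's $2r-9$ to $\frac{r^2}{4}-r$, that expectation is precisely what must be verified; as written the proposal is a research plan rather than a proof. A small simplification you missed: $\frac{r^2}{4}-r$ equals $-1$, $-\frac{3}{4}$, $0$ at $r=2,3,4$, so the inequality is simply $\sum_{r\ge 2}\bigl(r-\frac{r^2}{4}\bigr)l_r\ge n$ and the ``separation of small $r$'' is not where any difficulty lies.
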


For the Weak Dirac Conjecture, based on results of \cite{Langer} and \cite{Bojanowski}, we show that,

\begin{theorem}
  \label{Thm:FabGraphs1}
  Every set $\mathcal{P}$ of $n$  non-collinear points in the plane contains a point incident to at least $\lceil\frac{n}{3}\rceil+1$ lines of $\mathcal{L}(\mathcal{P})$, where $\lceil x\rceil$ denotes the smallest integer greater than or equal to $x$.
\end{theorem}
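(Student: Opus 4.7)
The plan is to split into two cases according to the maximum number of collinear points of $\mathcal{P}$, writing $t = \max_{P\in\mathcal{P}} d(P)$ throughout. In the first case, suppose some line $\ell \in \mathcal{L}(\mathcal{P})$ contains at least $\lfloor 2n/3 \rfloor + 1$ points of $\mathcal{P}$. Since $\mathcal{P}$ is non-collinear, some point $P$ lies off $\ell$, and $P$ is joined to the points of $\ell$ by pairwise distinct lines, so $d(P) \geq \lfloor 2n/3 \rfloor + 1$. A short case check modulo $3$ shows $\lfloor 2n/3 \rfloor \geq \lceil n/3 \rceil$ for all $n \geq 3$, so the conclusion $d(P) \geq \lceil n/3 \rceil + 1$ holds.

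In the remaining case every line of $\mathcal{L}(\mathcal{P})$ carries at most $\lfloor 2n/3 \rfloor$ points, so Theorem~\ref{Thm:FabGraphs0} applies and yields
\[
l_2 + \tfrac{3}{4} l_3 \geq n + \sum_{r \geq 5}\Bigl(\tfrac{r^2}{4}-r\Bigr) l_r.
\]
I plan to couple this with two standard double counts: the pair identity $\sum_{r\geq 2} r(r-1) l_r = n(n-1)$ (each pair of points lies on a unique determined line), and the incidence estimate $\sum_{r\geq 2} r l_r = \sum_{P \in \mathcal{P}} d(P) \leq nt$. Multiplying the Hirzebruch-type inequality by $4$ to clear denominators, then solving the pair identity for $\sum_{r \geq 5} r^2 l_r$ and substituting, I expect the $l_2$, $l_3$, $l_4$ contributions to cancel on both sides (the coefficient $r^2/4 - r$ is already zero at $r = 4$, and the remaining $r = 2, 3$ terms match up after substitution). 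This should collapse everything to the clean estimate $3 \sum_{r\geq 2} r l_r \geq n(n+3)$. Combined with $\sum_r r l_r \leq nt$, it gives $t \geq (n+3)/3$, and because $t$ is an integer $t \geq \lceil (n+3)/3\rceil = \lceil n/3 \rceil + 1$.

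The step to watch is the cancellation in the previous paragraph: both the quadratic growth $r^2/4 - r$ of the Langer--Bojanowski coefficients and the specific zero at $r = 4$ are essential for the algebra to condense into $3 \sum_r r l_r \geq n(n+3)$ with no leftover positive terms that would demand a separate analysis. The original Hirzebruch inequality with coefficients $2r-9$ grows only linearly in $r$ and cannot produce a bound of order $n^2$ on $\sum_r r l_r$; the sharpened Theorem~\ref{Thm:FabGraphs0} is exactly what upgrades the Payne--Wood $n/37$ and Pham--Phi $n/26$ bounds to the factor $1/3$.
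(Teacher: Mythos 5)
Your proposal is correct and follows essentially the same route as the paper: apply the Langer--Bojanowski inequality (Theorem~\ref{Thm:FabGraphs0}), eliminate the quadratic terms via the pair-counting identity so that everything collapses to $3\sum_{r\geq2} r\,l_r \geq n(n+3)$, and finish by averaging over the $n$ points. The only cosmetic difference is where you place the case split (at $\lfloor 2n/3\rfloor+1$ collinear points, exactly where Theorem~\ref{Thm:FabGraphs0}'s hypothesis fails, versus the paper's $\lceil n/3\rceil+1$); both are valid, and your verification that the $l_2,l_3,l_4$ terms cancel is exactly the computation the paper performs.
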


\section{Proof of Theorem~\ref{Thm:FabGraphs1}}

By double-counting pairs of points in $\mathcal{P}$, we have the following equation,

\begin{lemma}
  \label{lem:Technical1}
  Let $\mathcal{P}$ a set of $n$ points in the plane, and $\mathcal{L}(\mathcal{P})$ the set of lines determined by $\mathcal{P}$, $l_r$ the number of $r$-rich lines of $\mathcal{L}(\mathcal{P})$, then
  $\sum_{r\geq2}{r\choose2}l_r={n\choose2}$.
\end{lemma}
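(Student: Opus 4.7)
The plan is a standard double-counting of the set $S$ of unordered pairs of distinct points in $\mathcal{P}$. On one hand, $|S|=\binom{n}{2}$ directly from $|\mathcal{P}|=n$. On the other hand, I would classify each pair by the unique line it spans: any two distinct points in $\mathcal{P}$ lie on exactly one common line, and that line is, by definition of $\mathcal{L}(\mathcal{P})$, a member of the arrangement; conversely, a line containing exactly $r$ points of $\mathcal{P}$ accounts for precisely $\binom{r}{2}$ pairs in $S$. Grouping the $l_r$ lines by richness $r\ge 2$ then gives $|S|=\sum_{r\ge2}\binom{r}{2}l_r$, and equating the two counts yields the asserted identity.

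There is no real obstacle; the only thing that actually needs verification is that the map ``pair $\mapsto$ line it spans'' is a well-defined function from $S$ to $\mathcal{L}(\mathcal{P})$ whose fibre over an $r$-rich line has cardinality exactly $\binom{r}{2}$. Both facts follow at once from the Euclidean axiom that two distinct points determine a unique line. Note that the summation index can indeed start at $r=2$, because by definition every line in $\mathcal{L}(\mathcal{P})$ contains at least two points of $\mathcal{P}$, so no pair is omitted and none is over-counted.
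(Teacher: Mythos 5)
Your proof is correct and takes exactly the route the paper intends: the paper gives no written proof beyond the remark that the lemma follows ``by double-counting pairs of points in $\mathcal{P}$,'' and your argument is precisely that double count, with the useful extra care of noting that the pair-to-line map is well defined and that starting the sum at $r=2$ loses nothing.
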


\begin{lemma}
  \label{lem:Technical2}
Let $\mathcal{P}$ a set of $n$ points in the plane, and $\mathcal{L}(\mathcal{P})$ the set of lines determined by $\mathcal{P}$, $d(P)$ the incident-line-number of $P$, and $l_r$ the number of $r$-rich lines of $\mathcal{L}(\mathcal{P})$, then $$\sum_{P\in\mathcal{P}}d(P)=\sum_{r\geq2}rl_r.$$
\end{lemma}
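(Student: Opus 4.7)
The plan is to establish the identity by a standard double-counting of the incidence set
\[
\mathcal{I}=\{(P,\ell)\in\mathcal{P}\times\mathcal{L}(\mathcal{P}):P\in\ell\},
\]
i.e.\ the set of (point, line) pairs where the point lies on the line. I would compute $|\mathcal{I}|$ in two different ways and observe that the two expressions must agree.

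First, I would fix a point $P\in\mathcal{P}$ and count the lines of $\mathcal{L}(\mathcal{P})$ through $P$. By the very definition of the incident-line-number recalled in the introduction, this count equals $d(P)$. Summing over all points of $\mathcal{P}$ therefore gives
\[
|\mathcal{I}|=\sum_{P\in\mathcal{P}}d(P).
\]

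Next, I would fix a line $\ell\in\mathcal{L}(\mathcal{P})$ and count the points of $\mathcal{P}$ lying on $\ell$. If $\ell$ is $r$-rich then this count is exactly $r$. Since every line in $\mathcal{L}(\mathcal{P})$ is, by definition, determined by at least two points of $\mathcal{P}$, grouping the lines according to their richness and summing gives
\[
|\mathcal{I}|=\sum_{r\geq 2} r\,l_r.
\]
Equating the two expressions for $|\mathcal{I}|$ yields the claimed identity.

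There is no substantive obstacle; the argument is a one-line double-count. The only small point I would flag is the lower limit of summation: one must justify starting at $r=2$, which is immediate from the convention in the introduction that a line is determined by $\mathcal{P}$ only if it passes through at least two points of $\mathcal{P}$, so no $r<2$ terms can appear. (This convention also ensures that the analogous identity used later, $\sum_{r\ge2}\binom{r}{2}l_r=\binom{n}{2}$ from Lemma \ref{lem:Technical1}, counts unordered pairs of points correctly.)
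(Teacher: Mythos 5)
Your proof is correct and is essentially the same argument as the paper's: the paper's one-line proof observes that summing the incident-line-numbers counts each $r$-rich line exactly $r$ times, which is precisely your double-count of the incidence set $\mathcal{I}$, merely stated without introducing $\mathcal{I}$ explicitly. Your added remark about why the sum starts at $r=2$ is a harmless formalization of the same convention the paper uses implicitly.
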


\begin{proof}
Summing the incident-line-numbers counts each $r$-rich line $r$ times, since each $r$-rich line contains $r$ points and contributes to the incident-line-number at every point.

\end{proof}

\begin{proof}[Proof of Theorem~\ref{Thm:FabGraphs1}]
Note that if $\mathcal{P}$ is non-collinear and contains $\lceil\frac{n}{c_1}\rceil+1$ or more collinear points, then Theorem~\ref{Thm:FabGraphs1} holds, thus we may assume that $\mathcal{P}$ does not contain $\lceil\frac{n}{3}\rceil+1$ collinear points. According to Theorem~\ref{Thm:FabGraphs0}, we have $$l_2+\frac{3}{4}l_3\geq n+\sum_{r\geq 5}(\frac{r^2}{4}-r)l_r\Leftrightarrow l_2+\frac{3}{4}l_3\geq n+\sum_{r\geq 5}\frac{{r\choose2}}{2}l_r-\frac{3}{4}\sum_{r\geq 5}rl_r.$$

Applying the Lemma~\ref{lem:Technical1}, we have, $$l_2+\frac{3}{4}l_3\geq n+\frac{{n\choose2}}{2}-\sum_{r= 2}^4\frac{{r\choose2}}{2}l_r-\frac{3}{4}\sum_{r\geq 5}rl_r\Leftrightarrow\sum_{r\geq2}rl_r\geq\frac{n(n+3)}{3}.$$

Applying the Lemma~\ref{lem:Technical2}, we have, $$\sum_{P\in\mathcal{P}}d(P)\geq\frac{n(n+3)}{3}.$$

Finally, according to the Pigeonhole Principle \cite{Pigeonhole}, this implies that every set $\mathcal{P}$ of $n$ points does not contain $\lceil\frac{n}{3}\rceil+1$ collinear points in the plane contains a point incident to at least $\lceil\frac{n}{3}\rceil+1$ lines of $\mathcal{L}(\mathcal{P})$.

Therefore every set $\mathcal{P}$ of $n$ non-collinear points in the plane contains a point incident to at least $\lceil\frac{n}{3}\rceil+1$ lines of $\mathcal{L}(\mathcal{P})$.

\end{proof}

\subsection*{Acknowledgements}
I am very grateful to Professor David Wood and other reviewers for their useful comments about the content of the note, they are very fast and give me some encouragement. I would like to thank Professor Piotr Pokora because I know Theorem~\ref{Thm:FabGraphs0} via his paper.


\end{document}